\documentclass{amsart}
\usepackage[utf8]{inputenc}
\usepackage[english]{babel}

\usepackage{amsfonts,amssymb,amsmath}
\usepackage{xcolor}
\usepackage[colorlinks,
linkcolor={red!50!black},
citecolor={blue!50!black},
urlcolor={blue!80!black}]{hyperref}
\usepackage[capitalize]{cleveref}

\usepackage[margin=30mm]{geometry}

\usepackage{tikz}
\usepackage[all]{xy}
\usepackage{enumitem}
\makeatletter
\newcommand{\mylabel}[2]{#2\def\@currentlabel{#2}\label{#1}}
\usetikzlibrary{calc, matrix, arrows, cd}

\newcommand{\bsm}{\left(\begin{smallmatrix}}
	\newcommand{\esm}{\end{smallmatrix}\right)}

\numberwithin{equation}{section}

\newtheorem*{thmintro}{Theorem}

\theoremstyle{definition}

\newtheorem*{claim*}{Claim}
\newtheorem*{remark*}{Remark}

\newcommand{\N}{\mathbb{N}}
\newcommand{\Z}{\mathbb{Z}}

\newcommand{\RP}{\mathbb{RP}}

\newcommand{\CP}{\mathbb{CP}}

\newcommand{\id}{\operatorname{id}}

\crefname{lemma}{Lemma}{Lemmas}

\title{Stably exotic knotted surfaces up to equivalence}

\author[D.~Galvin]{Daniel Galvin}
\address{Department of Mathematics, University of Texas at Austin, United States of America}
\email{daniel.galvin@austin.utexas.edu}

\author[D.~Kasprowski]{Daniel Kasprowski}
\address{School of Mathematical Sciences, University of Southampton, United Kingdom}
\email{d.kasprowski@soton.ac.uk }

\begin{document}
	\begin{abstract}
		In this short note we show the existence of stably exotic knotted surfaces in 4-manifolds in the sense that the embedded surfaces are topologically equivalent but are not smoothly equivalent even after any number of internal and external stabilisations.
	\end{abstract}
	\maketitle
	Two embeddings $s,s'$ of a surface $\Sigma$ in a $4$-manifold $M$ are topologically or smoothly equivalent if there exists a homeomorphism or diffeomorphism $g\colon M\to M$ such that $s'=g\circ s$, respectively. An \emph{external stabilisation} of $s\colon \Sigma\to M$ is given by considering the embedding $s$ as an embedding of $\Sigma$ into $M\#(S^2\times S^2)$, where the connected sum is taken away from the image of $s$. An \emph{internal stabilisation} of $s$ is the embedding $s\#U\colon \Sigma\#(S^1\times S^1)\to M\#S^4$, where $U\colon S^1\times S^1\to S^4$ is the trivial embedding. Equivalently, it is given by adding a trivial, local tube to the embedding $s$.
	
	We construct embedded surfaces that are topologically but not smoothly equivalent even after any number of external and internal stabilisations. More explicitly, we show the following.
	\begin{thmintro}
		Let $g\in\N_0$ be given.
		\begin{enumerate}
			\item There exist smoothly embedded genus $g$ orientable surfaces with normal Euler number~$1$ in $\RP^4\#\CP^2$ that are topologically but not smoothly equivalent. Moreover, they are not smoothly equivalent even after any number of internal and external stabilisations.
			\item For $n\in\N$ large enough, there exist smoothly embedded genus $g$ orientable surfaces with normal Euler number $0$ in $(\RP^2\times S^2)\#n(S^2\times S^2)$ that are topologically but not smoothly equivalent. Moreover, they are not smoothly equivalent even after any number of internal and external stabilisations.
		\end{enumerate}
	\end{thmintro}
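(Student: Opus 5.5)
We reduce the statement to the existence of exotic self-diffeomorphism phenomena on the complement of the surface. Concretely, the standard source of "stably exotic" behaviour in the nonorientable setting is the existence of a smooth 4-manifold pair (or a self-homeomorphism) detected by a stable invariant such as the Kirby–Siebenmann-type or Pin$^-$-structure/Rokhlin invariants that survive stabilisation by $S^2\times S^2$. Our plan is as follows. First, construct a fixed smooth embedding $s\colon\Sigma_g\to M$ ($M=\RP^4\#\CP^2$, resp. $(\RP^2\times S^2)\#n(S^2\times S^2)$) whose image is, say, a standard $\CP^1$ in the $\CP^2$ summand tubed with a trivially embedded genus $g$ surface (normal Euler number $1$), resp. a standard $S^2\times\mathrm{pt}$ in one $S^2\times S^2$ summand tubed with a trivial genus $g$ surface (Euler number $0$). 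Then set $s'=h\circ s$ where $h\colon M\to M$ is a self-homeomorphism that is not isotopic—indeed not even stably pseudo-isotopic in a suitable sense—to any diffeomorphism; topological equivalence of $s,s'$ is then automatic.

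The key input is a homeomorphism $h$ of $\RP^4$ (resp. of $\RP^2\times S^2\#n(S^2\times S^2)$, for $n$ large) that is not stably homotopic-through-structures to a diffeomorphism: e.g.\ use that $\RP^4$ admits exotic structures / self-homeomorphisms detected by the induced action on Pin$^{\pm}$-structures combined with the Rokhlin/$\mu$-type invariant in $\Z/16$ or $\Z/8$ (Brown invariant of characteristic surfaces). Since Pin structures and the associated $\Z/8$- or $\Z/16$-valued invariants of characteristic surfaces are diffeomorphism-invariant and unchanged by connected sum with $S^2\times S^2$, a diffeomorphism must preserve them, whereas we choose $h$ (built via Freedman's topological surgery, e.g.\ using a fake $\RP^4$ or a Kirby–Siebenmann-twisted construction so that $M\cong M\#\ast$ topologically) which acts nontrivially on this data. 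For part (2), $n$ large is needed so that the topological realisation of the required isometry/structure swap exists, via stable classification (Kreck's modified surgery) of the complement.

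Next, we show $s,s'$ are not smoothly equivalent after stabilisation: suppose $g\colon M\#k(S^2\times S^2)\to M\#k(S^2\times S^2)$ is a diffeomorphism with $g\circ(s\#mU)=s'\#mU$. Pass to the complement $X$ of a tubular neighbourhood of the surface; internal stabilisation corresponds to a stabilisation of the complement (by $S^2\times S^2$ after surgery on meridian circles), so the problem reduces to showing the complements of $s$ and $s'$, relative to their boundary identification, are not stably diffeomorphic rel boundary. Since the surface is characteristic or has controlled normal Euler number, we arrange the Pin$^-$ structures on $M$ that extend across the surface; $g$ must carry the set of such structures to itself while $h$ does not, and a $\Z/8$ Brown/Arf-type invariant of the surface (or of the complement's spin/pin bordism class in $\Omega_4^{\mathrm{Pin}^+}\cong\Z/16$) distinguishes them, this invariant being additive and zero on $S^2\times S^2$ and on the trivial torus.

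The main obstacle is the construction of $h$ with the right induced behaviour on pin structures relative to the surface, and verifying the invariant is genuinely unchanged under internal stabilisation (the tube could a priori alter the relevant boundary Pin structure). I would first try realising $h$ by Freedman–Quinn: take a topological $s$-cobordism/homotopy equivalence of $M$ realising a nontrivial element of the structure set that differs in the Kirby–Siebenmann invariant, then correct with $*\CP^2$ (which is why $\CP^2$ appears in (1)), and check via bordism of Pin$^+$ manifolds with the surface that the smooth invariant differs by $8\in\Z/16$.
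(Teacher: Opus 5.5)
There is a genuine gap. The proposal never actually produces the exotic pair, and the mechanism you suggest for producing it cannot work.

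\textbf{Part (1).} Writing $s'=h\circ s$ is harmless; the paper's pair has this form. But then you need a self-homeomorphism $h$ of $\RP^4\#\CP^2$ such that $h\circ s$ is smooth and its exterior is not stably diffeomorphic to that of $s$. In effect, you need a smooth manifold homeomorphic but not stably diffeomorphic to $\RP^4$ which becomes diffeomorphic to $\RP^4$ after $\#\CP^2$. The paper takes Cappell--Shaneson's $Q$ together with Akbulut's diffeomorphism $f\colon Q\#\CP^2\to\RP^4\#\CP^2$, and sets $s_1=f\circ(\CP^1\subset Q\#\CP^2)$. The exteriors are then $\RP^4\setminus\mathring{D}^4$ and $Q\setminus\mathring{D}^4$. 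The stable exoticness of $Q$ is the $\Z/16$ Pin$^+$ phenomenon you allude to.

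Your route to $h$ does not give this:
\begin{itemize}
\item Freedman surgery, structure-set elements with different Kirby--Siebenmann invariant, and ``correcting with $*\CP^2$'' cannot work. The Kirby--Siebenmann invariant is a homeomorphism invariant, $*\CP^2$ is not smoothable, and topological surgery gives no control of smooth structures.
\item A homeomorphism supported in the $\RP^4$ summand does not even move $s$.
\item ``Not isotopic to a diffeomorphism'' is the wrong criterion, since some other diffeomorphism could still carry $s$ to $s'$.
\item Pin structures are topological, so the action of $h$ on them cannot separate $h$ from a diffeomorphism. Moreover, $\RP^4\#\CP^2$ admits no Pin$^\pm$ structure at all; only the exterior of the characteristic sphere is Pin$^+$. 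The distinguishing invariant must be a smooth invariant of the closed-up exterior.
\end{itemize}

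The internal stabilisation step, which you flag as the obstacle, is also left open. Internal stabilisation is not an $S^2\times S^2$-stabilisation of the exterior: the boundary changes from a circle bundle over $\Sigma_g$ to one over $\Sigma_{g+1}$. The paper closes this step as follows.
\begin{itemize}
\item The exterior $\CP^2_{s^g}$ of the stabilised characteristic $\CP^1$ is spin.
\item Choose Pin$^+$ structures so that a putative stable diffeomorphism of exteriors preserves them, then glue on another copy of $\CP^2_{s^g}$.
\item The resulting closed pieces $X_i$ are simply connected, spin and of signature $0$ (Novikov additivity plus the doubling argument), hence stably $\#\,S^2\times S^2$.
\item This would make $\RP^4$ and $Q$ stably diffeomorphic, a contradiction.
\end{itemize}

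\textbf{Part (2).} Your choice of surface, $S^2\times\mathrm{pt}$ in an $S^2\times S^2$ summand, fails. It has a geometrically dual sphere, so its exterior, also after internal stabilisation, has $\pi_1\cong\Z/2$ and the normal $1$-type of $\RP^2\times S^2$, namely tangential Pin$^-$. Since $\Omega_4^{\mathrm{Pin}^-}=0$, Kreck's theorem makes two such exteriors that are homeomorphic rel boundary diffeomorphic rel boundary after stabilisation. So no stably exotic pair can arise this way. This is the same phenomenon that makes $(S^2\times\RP^2)\#K3\#n(S^2\times S^2)\cong(S^2\times\RP^2)\#(n+11)(S^2\times S^2)$ for $n$ large.

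The paper uses that diffeomorphism only to place both spheres in one manifold, and takes them to be $S^2\times\{*\}\subset S^2\times\RP^2$. That exterior is $S^2$ times a M\"obius band (plus summands), with $\pi_1\cong\Z$ and meridian twice a generator. Regluing (M\"obius band)$\times\Sigma_g$ to the stabilised exterior gives $Y_g\#(n+k+11)(S^2\times S^2)$ versus $Y_g\#K3\#(n+k)(S^2\times S^2)$, where $\pi_1(Y_g)$ is the Klein bottle group. Since this group is $2$-dimensional, \cite[Theorem~A(iii)]{KP-stablyexotic} shows these are not stably diffeomorphic. This change of fundamental group is the key idea missing from your plan. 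An appeal to Kreck's modified surgery alone has nothing with which to detect the $K3$ summand.
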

	
	\begin{remark*}\leavevmode
		\begin{enumerate}
			\item[(i)] Since any pair of orientable, homeomorphic $4$-manifolds are stably diffeomorphic by a result of Gompf \cite{Gompf}, the ambient manifold in our examples is necessarily non-orientable. For more details on stably exotic $4$-manifolds see \cite{KP-stablyexotic} for the closed case and \cite{KOPR} for the case with boundary. In particular one can use \cite[Proposition~8.3]{KOPR} to show that our examples have exteriors that remain stably exotic after a connected sum with any number of copies of $S^1\times S^3$, thus providing an infinite family of such pairs. On the other hand for any three topologically equivalent surfaces in a given $4$-manifold at least two of them become smoothly equivalent after some number of external stabilisations as can be seen from \cite[Proposition~8.1]{KOPR}.
			\item[(ii)] The existence of stably exotic knotted surfaces in terms of isotopy, i.e.\ topologically isotopic but not smoothly stably isotopic surfaces, remains open \cite[Question~1.1]{GOP25}. For such examples to exist, the ambient manifold cannot be simply connected by \cite[Theorem 1.2]{Galvin24} and further restrictions on the fundamental group are given by \cite[Theorem~A]{GOP25}.
		\end{enumerate}
	\end{remark*}
	
	Since an internal stabilisation increases the genus of the embedded surface by one, in both cases it suffices to construct examples of embedded spheres. 
	
	\begin{proof}[Proof of (1)]
		Let $Q$ denote Cappell--Shaneson's exotic $\RP^4$. This is homeomorphic to $\RP^4$ but not stably diffeomorphic to $\RP^4$, i.e. there does not exist a $k$ such that $\RP^4\#k(S^2\times S^2)$ and $Q\#k(S^2\times S^2)$ are diffeomorphic \cite[p.61]{Cappell-Shaneson}. See \cite[Section~5.6.]{counterexamples} for a discussion of properties of $Q$ and further examples of exotic $\RP^4$s. There is a diffeomorphism $f\colon Q\#\CP^2\to \RP^4\#\CP^2$ by \cite[Theorem~2]{akbulut}, see also \cite[Theorem~18]{bais-torres}. Let $s_0\colon S^2\to \RP^4\#\CP^2$ be the embedding of $\CP^1$ into $\CP^2$. Similarly, let $s_1'\colon S^2\to Q\#\CP^2$ be the embedding of $\CP^1$ into $\CP^2$ and define $s_1:=f\circ s_1'$. By definition the exteriors of $s_0$ and $s_1$ are diffeomorphic to $\RP^4\setminus \mathring{D}^4$ and $Q\setminus \mathring{D}^4$, respectively. In particular, since the exteriors are homeomorphic, the embeddings are topologically equivalent but not externally stably smoothly equivalent. This example of exotic embedded spheres is \cite[Example~19]{bais-torres}.
		
		To complete the proof of (1) it remains to show that the embeddings $s_0$ and $s_1$ do not become externally stably diffeomorphic after any number of internal stabilisations.
		Let $s^g\colon \Sigma_g\to \CP^2$ be the internal, $g$-fold stabilisation of $\CP^1\to \CP^2$. Since the embedding is characteristic, the exterior $\CP^2_{s^g}$ is spin. Since $\CP^2_{s^g}\times I$ is a nullbordism of its boundary, $\CP^2_{s^g}\cup_\partial\CP^2_{s^g}\cong\partial(\CP^2_{s^g}\times I)$ has trivial signature. By Novikov additivity, it follows that gluing two copies of $\CP^2_{s^g}$ along any diffeomorphism of their boundaries has trivial signature. 
		
		Let $s_i^g$ be the internal, $g$-fold stabilisation of $s_i$. Their complements are diffeomorphic to $\RP^4\#\CP^2_{s^g}$ and $Q\#\CP^2_{s^g}$, respectively. In particular they admit a pin$^+$-structure. Any pin$^+$-structure will induce the unique spin-structure on $\CP^2_{s^g}$ and thus a unique spin structure on the boundary. Assume there is a stably diffeomorphism $h\colon \RP^4\#\CP^2_{s^g}\#k(S^2\times S^2)\to Q\#\CP^2_{s^g}\#k(S^2\times S^2)$. We pick pin$^+$-structures such that $h$ is pin$^+$-structure preserving. In particular, it preserves the induced spin structure on the boundary. Gluing along the boundary with $\CP^2_{s^g}$, we to obtain a diffeomorphism between the closed manifolds $\RP^4\#X_1\#k(S^2\times S^2)$ and $Q\#X_2\#k(S^2\times S^2)$, where $X_i$ is a spin manifold consisting of two copies of $\CP^2_{s^g}$ glued along the boundary. As discussed in the previous paragraph, $X_i$ has trivial signature. Since $X_i$ is also simply-connected, it is stably diffeomorphic to $l$ copies of $S^2\times S^2$ for some $l$. It follows that $RP^4$ and $Q$ are stably diffeomorphic, a contradiction. Hence $s_0$ and $s_1$ are internally and externally stably exotic as claimed.
	\end{proof}
	
	\begin{proof}[Proof of (2)]
		An application of Kreck's modified surgery yields that, for $n$ large enough, the manifolds~$X:=(S^2\times \RP^2)\#(n+11)(S^2\times S^2)$ and $X':=(S^2\times \RP^2)\#K3\#n(S^2\times S^2)$ are diffeomorphic, see also \cite[Theorem~A]{KP-stablyexotic}. Let $f\colon X'\to X$ be such a diffeomorphism. Let $s_0\colon S^2\to (S^2\times \RP^2)\#(n+11)(S^2\times S^2)$ be given by taking $S^2\times\{*\}$ in $S^2\times \RP^2$. Similarly, let $s_1'\colon S^2\to (S^2\times \RP^2)\#K3\#n(S^2\times S^2)$  be given by taking $S^2\times\{*\}$ in $S^2\times \RP^2$. We set $s_1:=f\circ s_1'$. Let $M$ be the M\"obius band. Then the complement of $\nu s_0$ is diffeomorphic to $(S^2\times M)\# (n+11)(S^2\times S^2)$ while the complement of $\nu s_1$ is diffeomorphic to $(S^2\times M)\#K3\#n(S^2\times S^2)$. Let $s_0^g$ and $s_1^g$ denote the $g$-fold internal stabilisations of $s_0$ and $s_1$. The boundary of $\nu s^g_i$ is diffeomorphic to $S^1\times \Sigma_g$ for $i=0,1$. Let $W_g$ and $W_g'$ denote the complements of $\nu s_0^g$ and $\nu s_1^g$, respectively.
		
		Assume there is a diffeomorphism taking $s_0^g$ to $s_1^g$ after taking a $k$-fold external stabilisation. Then this diffeomorphism induces a bundle isomorphism on the normal bundles and thus gives a diffeomorphism $h$ from $W_g\#k(S^2\times S^2)$ to $W_g'\#k(S^2\times S^2)$ that respects the structure of an $S^1$ bundle on the boundary. All bundle automorphism of the trivial $S^1$-bundle $S^1\times \Sigma_g$ extend over $M\times \Sigma_g$ and we obtain well-defined manifolds $X_g:=W_g\cup_\partial(M\times \Sigma_g)\#k(S^2\times S^2)$ and $X'_g:=W'_g\cup_\partial(M\times \Sigma_g)\#k(S^2\times S^2)$, where we choose the gluing map for $X'_g$ by postcomposing the gluing map for $X_g$ with $h\mid_{\partial W_g}$.  Hence we also obtain a diffeomorphism $h\cup\id\colon X_g\to X'_g$.
		 
		We now seek alternate descriptions of $X_g$ and $X'_g$ that are more amenable to the methods of modified surgery (as in \cite{KP-stablyexotic}).  Performing the internal stabilisations of $S^2\times\{*\}$ inside $S^2\times \RP^2$ yields a surface whose exterior $C$ is a manifold with fundamental group $\Z$ and boundary $S^1\times \Sigma_g$. Let $Y_g:=C\cup_\partial M\times \Sigma_g$. The inclusion of the boundary induces maps \[\pi_1(S^1\times \Sigma_g)\to \pi_1(S^1)\cong \Z\xrightarrow{\cdot 2}\Z\cong \pi_1(C)\] and \[\pi_1(S^1\times \Sigma_g)\cong \Z\oplus\pi_1(\Sigma_g)\xrightarrow{(\cdot 2,1)}\Z\oplus\pi_1(\Sigma_g)\cong \pi_1(M\times \Sigma_g).\]
		By Seifert--van Kampen it follows that $\pi_1(Y_g)$ is isomorphic to the pushout of $\Z\xleftarrow{\cdot 2}\Z\xrightarrow{\cdot 2}\Z$ which is the fundamental group of the Klein bottle. In particular, $\pi_1(Y_g)$ is 2-dimensional.
		
		By construction $X_g$ is diffeomorphic to $Y_g\#(n+k+11)(S^2\times S^2)$, while $X'_g$ is diffeomorphic to $Y_g\#K3\#(n+k)(S^2\times S^2)$. It follows again from Kreck's modified surgery theory, e.g.\ an application of \cite[Theorem~A, (iii)]{KP-stablyexotic} (using that $\pi_1(Y_g)$ is 2-dimensional), that $X_g$ and $X'_g$ are not stably diffeomorphic. This contradicts the existence of $h$. Hence $s_0^g$ and $s_1^g$ are not externally stably diffeomorphic.
	\end{proof}

	 \subsection*{Acknowledgements}
	 
	 We are grateful to Valentina Bais for suggesting to consider $\RP^4$ and $Q$ for part (1) of the theorem. We thank Patrick Orson, Mark Powell, Aru Ray and Peter Teichner for helpful discussions.
	 
	 \subsection*{AI statement}
	 
	No AI was used in the preparation of this note.
	 
	 \bibliographystyle{amsalpha}
	 \bibliography{bib}	 
\end{document}